\documentclass[11pt, a4paper]{article}

\usepackage[T1]{fontenc}
\usepackage{amsmath}
\usepackage{amsthm}
\usepackage{amsfonts}
\usepackage{bbm}

\usepackage{cite}

\theoremstyle{plain}
\newtheorem{thm}{Theorem}

\newtheorem{prop}[thm]{Proposition}
\theoremstyle{definition}
\newtheorem{rem}[thm]{Remark}
\newtheorem{ex}[thm]{Example}

\def\P{\mathbb{P}}
\def\E{\mathbb{E}}
\def\R{\mathbb{R}}

\newcommand{\1}[1]{\mathbbm{1}_{#1}}
\newcommand{\di}{\,\mathrm{d}}

\let\BFseries\bfseries\def\bfseries{\BFseries\mathversion{bold}} 

\title{Ruin probabilities in the Cram\'er-Lundberg model with temporarily negative capital}

\author{Frank Aurzada \and Micha Buck}

\begin{document} 
\maketitle

\begin{abstract}
We study the asymptotics of the ruin probability in the Cram\'er-Lundberg
model with a modified notion of ruin. 
The modification is as follows. If the portfolio becomes negative, the asset is not immediately declared ruined but may survive due to certain mechanisms. Under a rather general assumption on the mechanism -- satisfied by most such modified models from
the literature -- we study the relation of the asymptotics of the modified
ruin probability to the classical ruin probability. This is done under the
Cram\'er condition as well as for subexponential integrated claim sizes. 
\end{abstract}

\section{Introduction}

The classical Cram\'er-Lundberg process $(U_t)_{t \geq 0}$ with 
\[
U_t = u + ct - \sum_{i=1}^{N_t} Y_i
\] 
is considered, where $u \geq 0$ denotes the initial capital, $c>0$ is the constant premium rate, $(N_t)_{t \geq 0}$ a Poisson process with rate $\lambda>0$ describing the number of claims until time $t$ and the sequence of i.i.d. claim sizes is denoted by $(Y_k)_{k \in \mathbb{N}}$ and is also independent of $(N_t)_{t \geq 0}$.
The process $(U_t)_{t \geq 0}$ describes the amount of surplus of an insurance portfolio indexed by time. 
Further, we assume that $\E[Y_1]=\mu>0$ and that the net profit condition $c > \lambda \mu$ is satisfied. We denote the distribution function of $Y_1$ by $F$ and set $\overline{F}(t):=1-F(t)$. 

In the classical setup, the time of ruin is defined by $T := \inf\{ t > 0 \colon U_t < 0 \}$ with $\inf \emptyset = \infty$.
Typically, one is first interested in the analysis of the classical ruin probabilities $\psi_\mathrm{cl}(u) := \P_u(T<\infty)$, as $u \to \infty$.
For an overview of the classical theory, we refer to \cite{Embrechts1997} and \cite{Dickson2016}.

Different ruin related quantities have attracted a lot of attention in the literature. 
Here, see for instance the well-cited work of Gerber and Shiu \cite{Gerber1998} and the vast number of papers that followed. 
Moreover, many extensions and modifications of the classical model have been established.
Again, in many situations, one is first interested in the corresponding questions from the classical setup.
In the recent literature, modified definitions of ruin are considered. 
For instance in \cite{Cai2007}, a model is studied where the insurance company can borrow money at a certain debit interest when $U_t$ is negative. 
Further, the concept of Parisian ruin has attracted a lot of attention in the literature. Here, the surplus process is allowed to stay negative for a continuous time interval of a fixed or random length, see \cite{Dassios2008}, \cite{Czarna2011}, \cite{Loeffen2013}, \cite{Landriault2014} and for the cumulative situation \cite{Guerin2017}. 
In omega models, the insurance company goes bankrupt at a random time at some surplus dependened bankruptcy rate when $U_t$ is negative, see \cite{Albrecher2011}, \cite{Gerber2012} and \cite{Albrecher2013}. 
This model is in turn linked to models where the insurance company can just go bankrupt at random observation times, see \cite{Albrecher2011a} and \cite{Albrecher2013a}.

The aim of the present note is to study the asymptotics of the ruin probability of a large class of models with modified notion of ruin. 
In contrast to the approaches in the recent literature, our technique does not require a specific model. We only need that, for $u \geq 0$,
\begin{align}
\begin{split}
\label{eq:model}
\psi(u) 
&=
\int_{-\infty}^0 \psi(y) \P_u( U_T \in \di y ,\, T < \infty ),
\end{split}
\end{align}
where $\psi(u)$ denotes the modified ruin probability, for initial capital $u$.
This assumption expresses that the mechanism that causes the ruin gets activated when the process hits the negative half-line. 
The general form of \eqref{eq:model} allows us to gather most models from the literature as well as many new models under one umbrella.

In order to define such a model and to verify \eqref{eq:model}, it is often natural to define a corresponding time of ruin $\tau$. 
Then, we set $\psi(u):=\P_u(\tau < \infty)$.
For example, in the situation of cumulative Parisian ruin (at level $r>0$), the modified ruin time is defined by $\tau := \inf\{ t>0 \colon \int_0^t \1{(-\infty,0)}(U_s) \di s > r \}$ and it follows immediately from the strong Markov property that \eqref{eq:model} is satisfied.
However, note that also every choice of a measurable function $\psi(u)$, for $u<0$, with values in $[0,1]$, defines such a model.
Further, note that the case $\psi(u)=1$, for $u<0$, coincides with the classical case. 

We proceed as follows. In Section \ref{sec:results}, we state and prove our main results. 
Then, we apply our results to a bunch of examples and give a short outlook in Section \ref{sec:examples}.

\section{Results}
\label{sec:results}

We investigate the two classical situations: Either the Cram\'er condition is fulfilled or the integrated claim sizes are subexponential.
Recall that the Cram\'er condition is satisfied for a constant $R>0$, if $ \lambda \E[ \exp(R Y_1) -1 ] = cR$. Then, it is well-known that $\psi_\mathrm{cl}(u) \sim k e^{-Ru}$ for some $k>0$, as $u \to \infty$, e.g.\ see Theorem 1.2.2 in \cite{Embrechts1997}.
Further, let $F_I$ be the distribution function defined by $F_I(t):=\frac{1}{\mu}\int_0^t \overline{F}(s) \di s$, for $t\geq 0$.
A distribution function $F$ is called subexponential if $\lim_{u \to \infty}\frac{\overline{F^{\ast 2}}(u)}{\overline{F}(u)} = 2$. In this case, we write $F \in \mathcal{S}$. If $F_I \in \mathcal{S}$, one has $\psi_\mathrm{cl}(u) \sim \frac{\lambda}{c-\lambda\mu}\int_u^\infty \overline{F}(z) \di z = \overline{F_I}(u)$, as $u \to \infty$, e.g.\ see Theorem 1.3.6 in \cite{Embrechts1997}.
We refer to this situation as heavy-tailed in the following.
For a discussion on subexponential distributions, see e.g.\ \cite{Embrechts1997}.

Our main theorem treats the relation of the asymptotics of modified ruin probabilities to the classical ruin probability. 
\begin{thm}
\label{thm:mainthm}
Let $\psi$ satisfy condition \eqref{eq:model}.
\begin{enumerate}
\item
Suppose the Cram\'er condition is fulfilled with parameter $R>0$. 
If $\psi$ is continuous or monotone on $(-\infty,0)$, 
then $\psi(u) \sim C\psi_\mathrm{cl}(u)$, as $u \to \infty$, where $C=\int_{-\infty}^0 \psi(y) \P_\infty(\di y)$ and $\P_\infty$ has distribution function $1-\frac{\lambda}{c-\lambda\mu} \int_0^\infty (e^{Rz}-1) \overline{F}(z+\cdot) \di z$.
\item 
If $F_I \in \mathcal{S}$ 
and $\lim_{u \to -\infty} \psi(u) = 1$, then $\psi(u) \sim \psi_\mathrm{cl}(u)$, as $u \to \infty$.
\end{enumerate}
\end{thm}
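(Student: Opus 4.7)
The starting point common to both parts is to use \eqref{eq:model} together with $\psi_{\mathrm{cl}}(u)=\P_u(T<\infty)$ to rewrite
\[
\frac{\psi(u)}{\psi_{\mathrm{cl}}(u)} \;=\; \int_{-\infty}^{0} \psi(y)\, \P_u\!\bigl(U_T \in \di y \,\big|\, T<\infty\bigr),
\]
so both assertions reduce to asymptotic statements about the conditional law of the deficit at ruin $U_T$ given $T<\infty$.

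For part (i), I would appeal to the classical Cram\'er--Lundberg machinery: an exponential change of measure with parameter $R$ turns the risk process into a random walk with positive mean, and Smith's key renewal theorem applied to the associated ladder-height process then yields the weak convergence of the conditional deficit distribution to a probability measure $\P_\infty$; after some integration by parts one recasts the resulting formulae in the form of the distribution function displayed in the theorem. Given this weak convergence, the hypothesis on $\psi$ is used solely to justify passing to the limit under the integral against a bounded measurable function. An inspection of the integral representation shows that $\P_\infty$ is atomless, so in the monotone case $\psi$ has at most countably many discontinuities, all of which are $\P_\infty$-null. Portmanteau therefore gives
\[
\frac{\psi(u)}{\psi_{\mathrm{cl}}(u)} \;\longrightarrow\; \int_{-\infty}^{0}\psi(y)\,\P_\infty(\di y) \;=\; C.
\]

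For part (ii), rearranging \eqref{eq:model} produces
\[
\psi_{\mathrm{cl}}(u) - \psi(u) \;=\; \int_{-\infty}^{0}\bigl[1-\psi(y)\bigr]\,\P_u(U_T \in \di y,\, T<\infty),
\]
which I would show is $o(\psi_{\mathrm{cl}}(u))$ via a two-step splitting. Given $\varepsilon>0$, the hypothesis $\psi(y)\to 1$ as $y\to-\infty$ supplies some $M>0$ with $1-\psi(y)\leq \varepsilon$ for $y<-M$, so the tail integral on $(-\infty,-M)$ is bounded by $\varepsilon\,\psi_{\mathrm{cl}}(u)$. For the head integral on $[-M,0]$, the crucial heavy-tailed feature is that ruin is produced by a single extreme claim, so the deficit $-U_T$ diverges in conditional law; quantitatively, $\P_u(-M\leq U_T\leq 0,\, T<\infty) = o(\psi_{\mathrm{cl}}(u))$, which follows from $\psi_{\mathrm{cl}}(u)\sim \overline{F_I}(u)$ combined with the long-tailedness $\overline{F_I}(u-M)/\overline{F_I}(u)\to 1$ (a consequence of $F_I \in \mathcal{S}$). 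Sending $u\to\infty$ first and then $\varepsilon\to 0$ finishes the proof.

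The principal technical obstacle lies in part (i): the explicit identification of the limit deficit distribution $\P_\infty$ with the distribution function stated in the theorem, which requires a careful execution of the Cram\'er tilt together with Smith's key renewal theorem on the ladder-height renewal process, plus integration-by-parts bookkeeping to arrive at the advertised form. Part (ii) is technically simpler, but still depends on a clean invocation of the overshoot-to-infinity property of subexponential ladder heights, which must be married with the splitting above.
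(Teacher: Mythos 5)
Your proposal is correct and its skeleton coincides with the paper's: both reduce the theorem via \eqref{eq:model} to the asymptotics of $\int_{-\infty}^0 \psi(y)\,\P_u(U_T\in\di y\mid T<\infty)$ and then feed in the limiting behaviour of the conditional law of the deficit at ruin. The difference is where that input comes from. The paper obtains it in \emph{both} regimes from a single citation, Theorem 2 of Schmidli (1999), which gives $\lim_{u\to\infty}\P_u(-U_T>x\mid T<\infty)$ explicitly whenever $\gamma(z)=\lim_{u\to\infty}\psi_\mathrm{cl}(u+z)/\psi_\mathrm{cl}(u)$ exists: under the Cram\'er condition $\gamma(z)=e^{-Rz}$ and the formula produces exactly the distribution function $\P_\infty$ in the statement (Schmidli's Example 2), while for $F_I\in\mathcal S$ one has $\gamma\equiv 1$ and the formula collapses to $\P_u(-U_T>x\mid T<\infty)\to 1$ for every $x$. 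You propose instead to rebuild these inputs by hand: in part (i) via the exponential tilt and the key renewal theorem --- which you rightly flag as the main labour and leave unexecuted --- and in part (ii) via the single-big-jump heuristic. Your treatment of the hypotheses on $\psi$ matches the paper's (atomlessness of $\P_\infty$ plus portmanteau for the monotone case, which the paper states more tersely), and your $\varepsilon$--$M$ splitting of $\psi_\mathrm{cl}(u)-\psi(u)$ is the paper's lower bound $\inf_{y<-x}\psi(y)\,\P_u(-U_T>x\mid T<\infty)$ written additively. One caution on part (ii): the step $\P_u(-M\le U_T\le 0,\,T<\infty)=o(\psi_\mathrm{cl}(u))$ does not drop out of ``$\psi_\mathrm{cl}(u)\sim\overline{F_I}(u)$ plus long-tailedness'' alone; to make it rigorous you should pass through the Pollaczek--Khinchine ladder-height representation, e.g.\ bounding the probability that some ladder sum lands in $(u,u+M]$ by $\tfrac{1}{1-\rho}\bigl(\psi_\mathrm{cl}(u)-\psi_\mathrm{cl}(u+M)\bigr)$ with $\rho=\lambda\mu/c$ and then using long-tailedness of $F_I$ --- or simply quote the $\gamma\equiv1$ case of Schmidli's theorem, as the paper does. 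With that step made precise, both parts go through.
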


\begin{proof}
Due to \eqref{eq:model}, we have
\begin{align}
\label{eq:psiFormula}
\begin{split}
\psi(u) 
&=
\int_{-\infty}^0 \psi(y) \P_u( U_T \in \di y ,\, T < \infty )\\
&=
\psi_\mathrm{cl}(u) \int_{-\infty}^0 \psi(y) \P_u( U_T \in \di y \mid T < \infty ),
\end{split}
\end{align}
and the analysis of the asymptotic behavior of the modified ruin probabilities reduces to the analysis of the integral $\int_{-\infty}^0 \psi(y) \P_u(U_T \in \di y \mid T < \infty)$, as $u \to \infty$.
Our result is based on Theorem 2 in \cite{Schmidli1999}, which states that, if the limit
\begin{equation}
\label{eq:gamma}
\gamma(z)=\lim_{u \to \infty} \frac{\psi_\mathrm{cl}(u+z)}{\psi_\mathrm{cl}(u)}
\end{equation}
exists, then
\begin{align}
\label{eq:limitdistr}
\begin{split}
& \lim_{u \to \infty} \P_u( -U_T>x \mid T < \infty ) \\
&=
\frac{1}{c-\lambda \mu} \left( c\gamma(x) - \lambda \int_0^x \gamma(x-z) \overline{F}(z) \di z - \lambda \int_x^\infty \overline{F}(z) \di z \right).
\end{split}
\end{align} 

Let us first assume that the Cram\'er condition is fulfilled for $R>0$.
Since $\psi_\mathrm{cl}(u) \sim k e^{-Ru}$ for some $k>0$, as $u \to \infty$, the limit in \eqref{eq:gamma} exists with $\gamma(z)=e^{-Rz}$.
Now, following Example 2 in \cite{Schmidli1999}, one obtains that $\P_u( -U_T \in \cdot \mid T < \infty )$ converges in distribution to the probability measure $\P_\infty$ with distribution function
\begin{equation*}
x \mapsto 1 - \frac{\lambda}{c-\lambda\mu} \int_0^\infty (e^{Rz}-1) \overline{F}(z+x) \di z.
\end{equation*}
If $\psi$ is continuous on $(-\infty,0)$, the claim follows immediately from \eqref{eq:psiFormula}.
Since the limit distribution is continuous, the claim follows as well if $\psi$ is monotone on $(-\infty,0)$.

Now, if $F_I \in \mathcal{S}$, 
one has $\psi_\mathrm{cl}(u) \sim \frac{\lambda}{c-\lambda\mu}\int_u^\infty \overline{F}(z) \di z = \overline{F_I}(u)$, as $u \to \infty$.  
Since, by Lemma 1.3.5 in \cite{Embrechts1997}, $F_I$ is long-tailed, we have $\gamma(z)=1$ in \eqref{eq:gamma}.
Therefore, for all $x\geq 0$,
\begin{equation*}
\lim_{u \to \infty} \P_u( -U_T>x \mid T < \infty )=1.
\end{equation*}
Thus, for any $x \geq 0$,
\begin{align*}
\int_{-\infty}^0 \psi(y) \P_u( U_T \in \di y \mid T < \infty )
&\geq
\inf_{y < -x} \psi(y) \P_u( -U_T > x \mid T < \infty ) \\
&\to \inf_{y < -x} \psi(y),
\end{align*}
as $u \to \infty$. 
Since $x$ was arbitrary, we can let $x \to \infty$, and the claim follows.
\end{proof}

\begin{rem}
Theorem \ref{thm:mainthm} is particularly useful in the heavy-tailed case, since 
without computing $\psi(u)$ explicitly, for $u<0$, one obtains exact asymptotic results for the modified ruin probabilities, as long as one knows that $\lim_{u \to - \infty} \psi(u) = 1$. 
This condition is very natural, since in most situations, it should become impossible to survive with negative surplus $y$, when $y \to -\infty$.
Likewise, without computing $\psi(u)$ explicitly, for $u<0$, one obtains that the modified and classical ruin probabilities differ asymptotically by a constant $C$, if the Cram\'er condition is fulfilled and 
$\psi$ is continuous or monotone on $(-\infty,0)$.
Again, e.g.\ the monotonicity assumption is very natural for similar reasons as above, since in most situations, it should become harder to survive when the surplus becomes more negative. 
Of course, in contrast to the heavy-tailed case, more knowledge of $\psi(u)$, for $u<0$, is required in order to compute the constant $C$.
\end{rem}

\begin{rem}
The proof of Theorem \ref{thm:mainthm} hinges on the limit theorem for the probability measure $\P_u( U_T \in \cdot \mid T<\infty )$, which leads to asymptotic results. 
For more precise results, more information about these probability measures is required. 
For example, explicit results (in terms of $\psi(u)$ for $u<0$) can be obtained if the claim sizes are $\exp(\delta)$-distributed. In this case, closed form expressions for the classical ruin probability $\psi_\mathrm{cl}(u)$ are available and $\P_u( U_T \in \cdot \mid T < \infty )$ is again $\exp(\delta)$-distributed, e.g.\ see \cite{Asmussen2000}.
\end{rem}

In many modified ruin models from the literature, 
the process starts renewed after surviving an excursion in the negative half-line. 
More precisely, in such situations, one has $1-\psi(y) = p_y(1-\psi(0))$ with $p_y := \P_y(T_0 < \tau)$, for $y<0$, where $T_0 := \inf\{ t>0 \colon U_t=0 \}$. That means, if the process survives until it reaches zero after becoming negative, the process starts renewed and survives afterwards with probability $1-\psi(0)$. 
In the following proposition, we will give expressions for the modified ruin probability $\psi(u)$ in terms of $p_y$.

\begin{prop}
\label{prop:psiRenew}
If $1-\psi(y) = p_y(1-\psi(0))$, for $y<0$, one has
\begin{equation*}
q_0 := 1 - \psi(0) = \frac{1-\psi_\mathrm{cl}(0)}{1-p_0}
\end{equation*}
with $p_0 = \P_0(T_0 < \tau , T<\infty)$ and
\begin{equation}
\label{eq:psiRenew}
\psi(u) 
= \psi_\mathrm{cl}(u) \left( 1 - q_0 \int_{-\infty}^0 p_y \P_u( U_T \in \di y \mid T<\infty) \right)
.
\end{equation}
\end{prop}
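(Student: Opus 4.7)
The plan is to plug the hypothesized form of $\psi$ on $(-\infty,0)$ into \eqref{eq:model}, which immediately yields \eqref{eq:psiRenew} up to identifying $q_0$, and then to evaluate at $u=0$ to obtain a scalar equation determining $q_0$.

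First, substitute $\psi(y) = 1 - p_y q_0$ for $y<0$ into the integral representation \eqref{eq:model}. Using that $\int_{-\infty}^0 \P_u(U_T \in \di y, T<\infty) = \psi_\mathrm{cl}(u)$, one gets
\begin{equation*}
\psi(u) = \psi_\mathrm{cl}(u) - q_0 \int_{-\infty}^0 p_y \, \P_u(U_T \in \di y , T<\infty),
\end{equation*}
and pulling $\psi_\mathrm{cl}(u)$ out of the second integral as in \eqref{eq:psiFormula} gives exactly \eqref{eq:psiRenew}, for any $u \geq 0$, provided $q_0 = 1-\psi(0)$.

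It remains to identify $q_0$. Setting $u=0$ in the previous display and using $\psi(0) = 1 - q_0$ gives
\begin{equation*}
1 - q_0 = \psi_\mathrm{cl}(0) - q_0 \int_{-\infty}^0 p_y \, \P_0(U_T \in \di y , T<\infty).
\end{equation*}
The key step is to recognize that the remaining integral equals $p_0$. This follows from the strong Markov property of $(U_t)$ at the stopping time $T$: starting from $U_0=0$, on $\{T<\infty\}$ the post-$T$ process is a Cram\'er-Lundberg process started at $U_T<0$, so the conditional probability of reaching $0$ before modified ruin is $p_{U_T}$, whence
\begin{equation*}
p_0 = \P_0(T_0 < \tau, T<\infty) = \int_{-\infty}^0 p_y \, \P_0(U_T \in \di y, T<\infty).
\end{equation*}
Substituting back and solving the resulting linear equation $1-q_0 = \psi_\mathrm{cl}(0) - q_0 p_0$ for $q_0$ yields $q_0 = (1-\psi_\mathrm{cl}(0))/(1-p_0)$.

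The only nontrivial point is the strong Markov argument giving the representation of $p_0$; everything else is algebra. One should also briefly note that the denominator $1-p_0$ is positive under the net profit condition, so that the formula for $q_0$ is well-defined.
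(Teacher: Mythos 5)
Your proposal is correct and follows essentially the same route as the paper: substitute the assumed form of $\psi$ on $(-\infty,0)$ into \eqref{eq:model}, evaluate at $u=0$ to solve a linear equation for $q_0$, and identify $p_0=\P_0(T_0<\tau,\,T<\infty)$ with the integral $\int_{-\infty}^0 p_y\,\P_0(U_T\in\di y,\,T<\infty)$ via the strong Markov property at $T$. The only cosmetic difference is that the paper manipulates $1-\psi$ rather than $\psi$; your remark on $1-p_0>0$ is a nice added touch not present in the paper.
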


\begin{proof}
By \eqref{eq:model} and the assumption, we obtain 
\begin{align}
\label{eq:proofLemma}
1-\psi(u)
&=
1-\int_{-\infty}^0 \psi(y) \P_u( U_T \in \di y ,\, T < \infty ) \nonumber \\
&=
1-\psi_\mathrm{cl}(u) + \int_{-\infty}^0 (1-\psi(y)) \P_u( U_T \in \di y ,\, T < \infty ) \nonumber \\
&=
1-\psi_\mathrm{cl}(u) + (1-\psi(0)) \int_{-\infty}^0 p_y \P_u( U_T \in \di y ,\, T < \infty ).
\end{align}
For $u=0$, it follows that
\begin{equation*}
1-\psi(0) = (1-\psi_\mathrm{cl}(0)) + (1-\psi(0)) \int_{-\infty}^0 p_y \P_0( U_T \in \di y ,\, T < \infty ),
\end{equation*}
and thus,
\begin{equation*}
1-\psi(0) = \frac{1-\psi_\mathrm{cl}(0)}{1-p_0}
\end{equation*}
with $p_0 = \P_0(T_0 < \tau ,\, T<\infty)=\int_{-\infty}^0 p_y \P_0(U_T \in \di y ,\, T<\infty)$.
Now, \eqref{eq:psiRenew} follows from \eqref{eq:proofLemma}.
\end{proof}

\begin{rem}
Since $\P_0(U_T \in \cdot \mid T<\infty)$ has the distribution function $F_I$, e.g.\ see Proposition 8.3.2 in \cite{Embrechts1997}, 
an explicit expression for $p_0$ in terms of $p_y$, for $y<0$, is available.
Further, it is known that $\psi_\mathrm{cl}(0)=\frac{\mu \lambda}{c}$, e.g.\ see p.~31 in \cite{Embrechts1997}.
This together with Proposition \ref{prop:psiRenew} gives an explicit expression for $\psi(0)$ in terms of $p_y$, for $y<0$.
\end{rem}

\begin{rem}
\label{rem:perspective}
The formulation of the modified ruin probability in \eqref{eq:psiRenew} in terms of $p_y$ leads to a new perspective: We can think of $p_y$, for $y<0$, as the probability of finding an investor when the surplus drops below zero that pays until recovery. 
This perspective is also a natural starting point to build new models 
in the sense that any measurable function $p_y$ on $(-\infty,0)$ defines a model with modified definition of ruin and the preceding interpretation.
\end{rem}

\begin{rem}
Proposition \ref{prop:psiRenew} gives us an exact expression for the modified ruin probability $\psi(u)$ in terms of $p_y$, for $y<0$, and the probability measure $\P_u(U_T \in \cdot \mid T<\infty)$.  
Combining this result with Theorem \ref{thm:mainthm}, we obtain that, 
if the Cram\'er condition is fulfilled and if
$p_y$ is continuous or monotone, one has $\psi(u) \sim C\psi_\mathrm{cl}(u)$, as $u \to \infty$, with $C=1 - q_0 \int_{-\infty}^0 p_y \P_\infty( \di y )$.
The condition $\lim_{u\to-\infty}\psi(u)=1$ in the second part of Theorem \ref{thm:mainthm}  translates now into the condition $\lim_{y \to - \infty }p_y = 0$. In this case $\psi(u) \sim \psi_\mathrm{cl}(u)$, as $u\to\infty$.
Again, we emphasize at this point that the above assumptions are quite natural. For example, it is natural to assume that $p_y$ is monotone, since it should be harder to find an investor when the surplus becomes more negative. Similarly, it should become impossible to find an investor with negative surplus $y$, as $y \to -\infty$.
\end{rem}

\section{Examples and outlook}
\label{sec:examples}

We will give examples and show that our results can be applied to many established models from the literature.
\begin{ex}
We choose $p_y=p \in [0,1]$, for $y<0$. 
Then, $p_0 = p \frac{\mu \lambda}{c}$, and thus, by \eqref{eq:psiRenew},
\[ \psi(u) = \psi_\mathrm{cl}(u) \left( 1 - p \frac{1-\frac{\mu \lambda}{c}}{1-p\frac{\mu \lambda}{c}} \right). \]
This example corresponds to the situation where the probability of finding an investor does not depend on $U_T$.
\end{ex}

\begin{ex}
If 
the claim sizes are $\exp(\delta)$-distributed, one obtains straightforwardly from \eqref{eq:psiRenew} for arbitrary $p_y$ that
\[
\psi(u) 
= \psi_\mathrm{cl}(u) \frac{1- {c \delta p_0}/{ \lambda} }{1-p_0}, \quad \text{with } p_0 = \frac{ \lambda}{c \delta}  \int_{-\infty}^0 p_y \delta e^{\delta y} \di y .
\] 
\end{ex}
Next, we will see that our results can be applied to most models with a modified definition of ruin from the literature. 
\begin{ex}
\label{ex:Parisian}
First, let us recall the definitions of Parisian ruin and cumulative Parisian ruin, respectively.
Let $g_t := \sup\{ s \leq t \colon U_s \geq 0 \}$. Then, the time of Parisian ruin (at level $r>0$) is defined as \[ \tau := \inf\{ t > 0 \colon t-g_t > r \}. \]
The time of cumulative Parisian ruin (at level $r>0$) is defined as \[\tau := \inf\left\{ t>0 \colon \int_0^t \1{(-\infty,0)}(U_s) \di s > r \right\}.\] 
If the constant $r$ is replaced by an independent exponentially distributed random variable, one obtains the definition of exponential (cumulative) Parisian ruin. 
In these cases, it is straightforward to verify the assumption on $\psi(u)$, for $u<0$, in Theorem \ref{thm:mainthm}.
Recall that explicit expressions of the cumulative Parisian ruin probabilities are given in \cite{Guerin2017} for exponentially distributed claim sizes. Our results extend these results to asymptotic results for more general claim size distributions if the Cram\'er condition is satisfied or if $F_I \in \mathcal{S}$.
\end{ex}

\begin{ex}
\label{ex:Omega}
Our results can be applied to omega models. Let $\omega \colon \R \to \R$ be a monotonically non-increasing function with $\omega(y)=0$, for $y \geq 0$, and $\omega(y)>0$, for $y<0$. Let $\mathrm{e}_1$ be an $\exp(1)$-distributed random variable independent of $(U_t)_{t\geq 0}$. Then, in an omega model, the time of ruin is defined as \[ \tau := \inf\left\{ t>0 \colon \int_0^t \omega(U_s) \di s > \mathrm{e}_1 \right\}. \] Thus, $\omega$ describes the bankruptcy rate in the model.
It is straightforward to verify that $\psi(u)>0$ and $\lim_{u \to -\infty} \psi(u) = 1$ and we can apply Theorem~\ref{thm:mainthm}.  
Thus particularly, we extend the results in \cite{Albrecher2013} -- where the authors restricted themselves to exponentially distributed claim sizes -- to asymptotic results if  the Cram\'er condition is fulfilled or if $F_I \in \mathcal{S}$. 
\end{ex}

\begin{rem}
If the bankruptcy rate in Example \ref{ex:Omega} is constant for $y<0$, the process can only stay exponential times in the negative half-line. Thus, we have the same situation as in the exponential (cumulative) Parisian ruin model in Example \ref{ex:Parisian}. Due to the memorylessness of the exponential distribution, this situation coincides further with a model where 
the insurance company can only go bankrupt after independent exponential times, see \cite{Albrecher2011a}, \cite{Albrecher2013a}. 
For this connection and more motivation for omega models, see \cite{Albrecher2013}. Further, if $\omega$ is constant for $y<0$, it is not hard to show that $p_y = e^{\gamma y}$, for $y<0$ and some $\gamma > 0$ depending on the bankruptcy rate and $F$. Hence, we have four different pictures in this case: Omega model with constant bankruptcy rate, exponential (cumulative) Parisian ruin, random observation times (with exponential times between observations) and a model where the probability of finding an investor decays exponentially. 
\end{rem}

\begin{ex}
In the model considered in \cite{Cai2007}, the insurance company can borrow money at a fixed debit interest rate when $U_t$ is negative. 
Clearly, if the surplus is below a certain negative level, the due interest exceeds the income of the insurance company and a recovery is impossible. 
Hence, in terms of our model, $\psi(y)$ and $p_y$ take the value $1$ and $0$, respectively, below this negative level. 
Thus,  Theorem \ref{thm:mainthm} can be applied.  
Moreover, we improve Theorem 4.1 in \cite{Cai2007}, since we can drop some of the technical assumptions there.
\end{ex}

Finally, let us give a short outlook.
In this paper, the Cram\'er-Lundberg model was considered to demonstrate our technique. We have proved that under the natural assumptions in Theorem \ref{thm:mainthm}, classical and modified ruin probabilities differ asymptotically by a constant factor if the Cram\'er condition is satisfied and are asymptotically equivalent if $F_I \in \mathcal{S}$.

There are many ways our results can be generalized.
Generally, as soon as limit theorems similar to \eqref{eq:limitdistr} are available for a process, corresponding results can be obtained. 
For instance, one can involve further quantities that affect the mechanism that causes the ruin. 
For example, the quantity $U_{T-}$ can be easily involved, e.g.\ see \cite{Schmidli1999}. However, there are, to the knowledge of the authors, no modified ruin definitions in the current literature using this quantity.  

Another direction is to consider different types of processes. 
It seems natural to consider spectrally negative L\'evy processes and processes that are perturbed by a Brownian motion. In the latter case, the process does not necessarily enter the negative half-line with a jump, and thus, this event would require additional techniques.

\end{document}